\newtheorem{thm}{Theorem}
\newtheorem{lemma}{Lemma}
\newtheorem{definition}{Definition}
\newtheorem*{remark}{Remark}
\title{\bf Median Regularity and Honest Inference}
\author{Arun Kumar Kuchibhotla}
\author{Sivaraman Balakrishnan}
\author{Larry Wasserman}
\affil{\texttt{\{arunku,\,siva,\,larry\}@stat.cmu.edu}}
\affil{Department of Statistics \& Data Science, Carnegie Mellon University}
\date{}                   
\begin{document}

\maketitle

\begin{abstract}
We introduce a new notion of regularity of an estimator called median regularity. We prove that uniformly valid (honest) inference for a functional is possible if and only if there exists a median regular estimator of that functional. To our knowledge, such a notion of regularity that is \emph{necessary} for uniformly valid inference is unavailable in the literature.
\end{abstract}
\maketitle

\section{Introduction}
There is a long standing distinction between pointwise and uniform inference for functionals in statistics. Pointwise confidence intervals have asymptotically correct coverage as the distribution of the data is fixed as the sample size diverges. On the other hand, uniform confidence intervals have asymptotically correct coverage even as the distribution of the data changes (i.e., triangular array) with the sample size. It is well-known that confidence intervals that only satisfy pointwise coverage validity can exhibit poor finite-sample behavior~\citep{potscher2002lower}. This phenomenon is very well explained through the famous Hodges estimator in the normal mean example; more complicated examples can be found in post-model selection problems~\citep{leeb2005model}. The distinction between pointwise and uniform inference has also been discussed in the context of resampling techniques such as bootstrap and subsampling; see~\cite{romano2012uniform,andrews2010asymptotic}. Traditionally, uniformly valid confidence intervals are constructed based on a \emph{regular} estimator~\citep{van1991differentiable,pfanzagl2000regularly,hirano2012impossibility}.

Suppose we have i.i.d. data $X_1, \ldots, X_n$ from a regular parametric model $p_{\theta_0}\in\{p_{\theta}:\,\theta\in\Theta\}$ for some open set $\Theta$. An estimator $\widehat{\tau}_n$ of $\tau = g(\theta_0)\in\mathbb{R}$ is said to be \emph{regular} if for some $r_n\to\infty$ and for any $h$, under $p_{\theta_0 + h/r_n}$,
\begin{equation}\label{eq:classical-regularity}
r_n\left(\widehat{\tau}_n - g(\theta_0 + h/\sqrt{n})\right) ~\overset{d}{\to}~ L_{\theta_0}, 
\end{equation}
for some distribution $L_{\theta_0}$ independent of $h$ as $n\to\infty$. Regularity in the sense of~\eqref{eq:classical-regularity} is a form of continuity of the limiting distribution (in $\theta_0$). Informally,~\eqref{eq:classical-regularity} says that the limiting distribution does not depend on the perturbation direction $h$. The theory of parametric and semiparametric efficiency is built around this notion of regularity defined in~\eqref{eq:classical-regularity}. Under the classical setting of parametric models that are locally asymptotically normal (LAN), the maximum likelihood estimator can be shown to be a regular estimator. 

Traditionally, inference based on a regular estimator follows by estimating $L_{\theta_0}$ (or the unknown quantities in $L_{\theta_0}$) based on the observed data. Because $L_{\theta_0}$ is the \emph{uniform} limiting distribution of the estimator in the neighborhood, the coverage of the confidence intervals thus obtained are locally uniformly valid. The regularity notion~\eqref{eq:classical-regularity} is currently the only general method of deriving (locally) uniformly valid inference~\citep{hirano2012impossibility}. However, a less known fact is that one does not need an estimator $\widehat{\tau}_n$ to be regular in the sense of~\eqref{eq:classical-regularity} in order to achieve uniformly valid inference, i.e., the estimator need not be regular in order to construct a uniformly valid confidence interval based on it. By validity here we mean that the miscoverage probability at level $\alpha$ is bounded above by $\alpha$. A simple example is the estimator $\widehat{\tau}_n = \overline{X}_n{1}\{\overline{X}_n \ge 0\}$ for the functional $\tau = \mu{1}\{\mu \ge 0\}$. The non-regular behavior of this estimator is well-known; see, for example,~\citet[Appendix 1]{robins2004optimal}. On the other hand, there are several ways to construct uniformly valid intervals based on the estimator $\widehat{\tau}_n$ (for instance, via the HulC procedure of~\citet{kuchibhotla2021hulc}).

This raises some important questions: is there a notion of regularity of the problem/estimator needed for uniformly valid inference? If so, what is that notion of regularity? We define a new notion of regularity called \emph{median regularity} of an estimator and show that pointwise or uniformly valid inference is possible if and only if there exists an estimator which is pointwise or uniformly median regular, respectively. Furthermore, it will be clear that an estimator that is uniformly median regular leads to uniformly valid inference. We will only focus on independent and identically distributed data throughout.
\section{Definitions and Main Result}
Suppose $X_1, \ldots, X_n$ are independent and identically distributed random variables with distribution $P$. Let $\theta = \theta(P)$ be a scalar functional of interest. For any estimator $\widehat{\theta}_n$ based on $X_1, \ldots, X_n$, define the median bias as
\begin{equation}\label{eq:median-bias}
\mbox{Med-bias}_{\theta}(\widehat{\theta}_n) = \left(\frac{1}{2} - \min\left\{\mathbb{P}(\widehat{\theta}_n \ge \theta_0),\,\mathbb{P}(\widehat{\theta}_n \le \theta_0)\right\}\right)_+,
\end{equation}
where $(x)_+ = \max\{x, 0\}$. We will write $\mbox{Med-bias}_{\theta(P)}(\widehat{\theta}_n; P)$ when we want to stress that $\theta = \theta(P)$ and the probabilities in the definition are computed when the underlying data are generated from $P$. 

An estimator $\widehat{\theta}_n$ is said to be median unbiased if $\mbox{Med-bias}_{\theta(P)}(\widehat{\theta}_n) = 0$~\citep{pfanzagl2011parametric}. This is a finite sample property as defined and is satisfied if the probability of over-estimation and under-estimation are both at least $1/2$, i.e., 
\[
\mathbb{P}(\widehat{\theta}_n \ge \theta(P)) \ge \frac{1}{2}\quad\mbox{and}\quad \mathbb{P}(\widehat{\theta}_n \le \theta(P)) \ge \frac{1}{2}.
\]
There exist several settings in the literature where finite sample median unbiased estimators exist. We only mention~\cite{pfanzagl1979optimal} that presents the most general result we know for construction of finite sample median unbiased estimators and refer the reader to Section 3 of~\cite{kuchibhotla2021hulc} for further references. Here, we present here four simple examples for illustration:
\begin{enumerate}
    \item Suppose $X_1, \ldots, X_n$ are i.i.d. random variables from $N(\theta_0, 1)$. Then the sample mean $\overline{X}_n$ is median unbiased for $\theta_0$. The same holds true for any symmetric location family.
    \item Suppose $X_1, \ldots, X_n$ are i.i.d. real valued random variables from some distribution $P$. Let the functional of interest be $\theta(P)$, the median of $P$. Then for any $r \le n/2$ the estimator
    \[
    \widehat{\theta}_n = \begin{cases}X_{(r)},&\mbox{with probability } 1/2,\\X_{(n-r+1)}, &\mbox{with probability }1/2,\end{cases}
    \]
    is median unbiased for $\theta(P)$. This holds irrespective of whether the underlying distribution $P$ has a density or not, and irrespective of whether this density is bounded away from 0 at the median or not; see~\citet[Section 4]{mahamunulu1969estimation}.
    \item Suppose $X_1, \ldots, X_n$ are i.i.d. random variables from $\mbox{Unif}(0, \theta_0)$. Then $\widehat{\theta}_n = X_{(n)} - 2X_{(n-1)}$ is median unbiased for $\theta_0$~\citep{loh1984estimating}. The MLE $X_{(n)}$ has a median bias of $1/2$, the largest possible median bias for any estimator.
    \item Suppose $X_1, \ldots, X_n$ are i.i.d. random variables from $N(\mu, 1)$. Let the functional of interest be $\kappa(\mu) = \mu\mathbbm{1}\{\mu \ge 0\}$. Then the estimator $\kappa(\overline{X}_n)$ is median unbiased for $\kappa(\mu)$~\citep[Section 3.7]{kuchibhotla2021hulc}.
\end{enumerate}
Although simple median unbiased estimators exist in some parametric models, the situation is complicated in semi-/non-parametric models where one typically relies on asymptotics. We define asymptotic notions of median unbiasedness below for a family of distributions $\mathcal{P}$.
Before describing these asymptotic notions, we remark that one can always define an estimator that takes two values $-\infty$ and $\infty$ with probability $1/2$ as a median unbiased estimator of any real-valued functional $\theta(P)$. This is a trivial and useless estimator for practical purposes. In the following, we will ignore such a trivial estimator and call an estimator \emph{non-trivial} if $\mathbb{P}(|\widehat{\theta}_n| < \infty) > 0$. 
\begin{definition}
A sequence of estimators $\{\widehat{\theta}_n:\,n\ge1\}$ of $\theta(P)$ is said to be \textbf{(asymptotically) median unbiased for a set of distributions $\mathcal{P}$} if
\[
\sup_{P\in\mathcal{P}}\limsup_{n\to\infty}\,\mathrm{Med}\mbox{-}\mathrm{bias}_{\theta(P)}(\widehat{\theta}_n; P) = 0.
\]
\end{definition}
Asymptotic median unbiasedness is a pointwise notion of median unbiasedness in the sense that we only require the median bias to converge to zero for every $P\in\mathcal{P}$.
\begin{definition}
A sequence of estimators $\{\widehat{\theta}_n:\,n\ge1\}$ of $\theta(P)$ is said to be \textbf{(asymptotically) median regular for a set of distributions $\mathcal{P}$} if
\[
\limsup_{n\to\infty}\,\sup_{P\in\mathcal{P}}\,\mathrm{Med}\mbox{-}\mathrm{bias}_{\theta(P)}(\widehat{\theta}_n; P) = 0.
\]
\end{definition}
Let us define a confidence interval procedure as a collection of intervals $\{\widehat{\mathrm{CI}}_{\alpha,n}:\alpha\in[0,1], n\ge1\}$ with $\widehat{\mathrm{CI}}_{\alpha,n}$ computable based on $n$ observations. This means that a confidence interval procedure is just a function of the data. Below we define different notions of validity of confidence intervals. Similar to non-trivial estimators, we ignore trivial confidence intervals which return $[-\infty, \infty]$ and call a confidence interval \emph{non-trivial} if at least one of the end points of the confidence interval is finite almost surely.
\begin{definition}
A confidence interval procedure $\{\widehat{\mathrm{CI}}_{\alpha,n}:\,\alpha\in[0,1],n \ge1\}$ for a functional $\theta(P)$ is said to be \textbf{(asymptotically) pointwise valid at level $\gamma$ for a set of distributions $\mathcal{P}$} if
\[
\sup_{P\in\mathcal{P}}\limsup_{n\to\infty}\,\mathbb{P}_P(\theta(P)\notin\widehat{\mathrm{CI}}_{\gamma,n}) \le \gamma.
\]
\end{definition}
\begin{definition}
A confidence interval procedure $\{\widehat{\mathrm{CI}}_{\alpha,n}:\,\alpha\in[0,1],n \ge1\}$ for a functional $\theta(P)$ is said to be \textbf{(asymptotically) uniformly valid at level $\gamma$ for a set of distributions $\mathcal{P}$} if
\[
\limsup_{n\to\infty}\,\sup_{P\in\mathcal{P}}\mathbb{P}_P(\theta(P)\notin\widehat{\mathrm{CI}}_{\gamma,n}) \le \gamma.
\]
\end{definition}
Asymptotically uniformly valid confidence intervals are also sometimes referred to as honest intervals~\citep{li1989honest,potscher2002lower}.
\begin{definition}
A confidence interval procedure $\{\widehat{\mathrm{CI}}_{\alpha,n}:\,\alpha\in[0,1],n \ge1\}$ for a functional $\theta(P)$ is said to be \textbf{(asymptotically) uniformly valid for a set of distributions $\mathcal{P}$} if there exists $\tau_n = \tau_n(\mathcal{P})\to0$ such that
\[
\limsup_{n\to\infty}\sup_{\gamma\in[\tau_n,1]}\sup_{P\in\mathcal{P}}\left(\mathbb{P}_P(\theta(P)\notin\widehat{\mathrm{CI}}_{\gamma,n}) - \gamma\right)_+ = 0.
\]
\end{definition}
The ordering of the supremums over $\gamma\in[\tau_n, 1]$ and $P\in\mathcal{P}$ is to make clear that $\tau_n$ is independent of $P$, it is only allowed to be a function of $\mathcal{P}$ and the sample size

The following is the main result of this paper proved in Section~\ref{sec:proof-of-equivalence-median-regular-uniform-inference}.
\begin{thm}\label{thm:equivalence-median-regular-uniform-inference}
For any set $\mathcal{P}$ of distributions (independent of the sample size $n$), the following three statements are equivalent for independent and identically distributed data:
\begin{enumerate}[label=(\arabic*)]
    \item There exists an (asymptotically) uniformly valid non-trivial confidence interval procedure at level $\gamma\in(0, 1)$ for $\theta(P)$ for the set of distributions $\mathcal{P}$.\label{eq:valid-at-gamma}
    \item There exists an (asymptotically) uniformly valid non-trivial confidence interval procedure for $\theta(P)$ for the set of distributions $\mathcal{P}$.\label{eq:valid-always}
    \item There exists a non-trivial (asymptotically) median regular estimator sequence for $\theta(P)$ for the set of distributions $\mathcal{P}$. \label{eq:median-regular}
\end{enumerate}
\end{thm}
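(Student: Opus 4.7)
My plan is to establish the cycle $(2) \Rightarrow (1) \Rightarrow (3) \Rightarrow (2)$. The implication $(2) \Rightarrow (1)$ is trivial: any fixed $\gamma \in (0,1)$ exceeds $\tau_n$ for all large $n$, so the strong-sense uniformly valid procedure of~\ref{eq:valid-always} specializes to a uniformly valid level $\gamma$ interval as required in~\ref{eq:valid-at-gamma}.

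For $(3) \Rightarrow (2)$, I would invoke the HulC construction of~\citet{kuchibhotla2021hulc}. Given a non-trivial median regular $\widehat{\theta}_n$ and a target level $\gamma$, split the data into $B_\gamma = \lceil \log_2(2/\gamma) \rceil$ disjoint blocks of size $m = \lfloor n/B_\gamma \rfloor$, compute $\widehat{\theta}_m^{(i)}$ on each, and output $[\min_i \widehat{\theta}_m^{(i)}, \max_i \widehat{\theta}_m^{(i)}]$. Independence across blocks together with $\max\{\mathbb{P}_P(\widehat{\theta}_m^{(i)} > \theta(P)),\, \mathbb{P}_P(\widehat{\theta}_m^{(i)} < \theta(P))\} \le 1/2 + \mbox{Med-bias}_{\theta(P)}(\widehat{\theta}_m; P)$ yields miscoverage bounded by $2(1/2 + o(1))^{B_\gamma} \le \gamma + o(1)$ uniformly in $P$. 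Letting $B_\gamma$ grow with $n$ and randomizing between consecutive integer block counts fills in the non-dyadic levels, delivering uniform validity with $\tau_n \to 0$ as in~\citet{kuchibhotla2021hulc}.

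The substantive direction is $(1) \Rightarrow (3)$. Given a uniformly valid level-$\gamma_0$ CI $[\widehat{L}_n, \widehat{U}_n]$ with $\gamma_0 \in (0,1)$ possibly close to $1$, I would proceed in two steps. \emph{Boost by sample splitting:} partition the data into $B_n$ blocks of size $m_n = \lfloor n/B_n \rfloor$, form per-block CIs $[\widehat{L}_{m_n}^{(i)}, \widehat{U}_{m_n}^{(i)}]$, and set $\widehat{L}_n^{*} = \min_i \widehat{L}_{m_n}^{(i)}$, $\widehat{U}_n^{*} = \max_i \widehat{U}_{m_n}^{(i)}$. Independence and the uniform single-block bound $\mathbb{P}_P(\widehat{L}_{m_n}^{(i)} > \theta(P)) \le \gamma_0 + o(1)$ give
\[
\sup_{P\in\mathcal{P}} \mathbb{P}_P(\widehat{L}_n^{*} > \theta(P)) \le (\gamma_0 + o(1))^{B_n},\qquad \sup_{P\in\mathcal{P}} \mathbb{P}_P(\widehat{U}_n^{*} < \theta(P)) \le (\gamma_0 + o(1))^{B_n},
\]
both of which tend to $0$ provided $B_n \to \infty$ and $m_n \to \infty$ (for instance $B_n = \lfloor \log \log n \rfloor$). \emph{Randomize:} define $\widehat{\theta}_n$ by an external fair coin equal to $\widehat{L}_n^{*}$ with probability $1/2$ and to $\widehat{U}_n^{*}$ with probability $1/2$. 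Then $\mathbb{P}_P(\widehat{\theta}_n \le \theta(P)) \ge (1/2) \mathbb{P}_P(\widehat{L}_n^{*} \le \theta(P)) \ge (1 - o(1))/2$ uniformly in $P$, and symmetrically $\mathbb{P}_P(\widehat{\theta}_n \ge \theta(P)) \ge (1 - o(1))/2$, so $\sup_P \mbox{Med-bias}_{\theta(P)}(\widehat{\theta}_n;P) = o(1)$, establishing median regularity. Non-triviality holds because the original non-trivial CI has, WLOG, $\widehat{U}_n$ finite almost surely, so $\widehat{U}_n^{*}$ (and thus $\widehat{\theta}_n$) is finite with probability at least $1/2$.

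The main obstacle is the boosting step when $\gamma_0$ is close to $1$: a single application of the CI carries only $(1 - \gamma_0)$ worth of coverage, so median unbiasedness cannot be extracted in one shot. The resolution crucially exploits that $\mathcal{P}$ is independent of $n$, which permits partitioning into $B_n \to \infty$ blocks with $m_n \to \infty$; the per-block asymptotic validity then activates \emph{uniformly} over $P \in \mathcal{P}$, and miscoverage is amplified geometrically to $o(1)$. The only delicate ingredient is the order-of-limits bookkeeping --- for each $\epsilon > 0$ choose $N(\epsilon)$ so that single-sample miscoverage is at most $\gamma_0 + \epsilon$ whenever $m \ge N(\epsilon)$, then select $B_n \to \infty$ slowly enough that $m_n \ge N(\epsilon)$ eventually --- which I would carry out explicitly.
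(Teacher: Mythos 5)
Your proposal takes a genuinely different cycle, $(2)\Rightarrow(1)\Rightarrow(3)\Rightarrow(2)$, whereas the paper argues $(1)\Rightarrow(2)\Rightarrow(3)\Rightarrow(1)$. The most interesting divergence is your $(1)\Rightarrow(3)$ argument. The paper first upgrades the fixed-level-$\gamma$ interval to a full confidence procedure (Lemma~\ref{lem:confidence-median-bias}(2) plus the delicate choice of $\alpha_n$), monotonizes in $\gamma$, specializes to a shrinking level $n^{-1/2}$, and only then applies the randomization trick of Lemma~\ref{lem:confidence-median-bias}(1). Your boosting-by-sample-splitting step avoids that entirely: by partitioning into $B_n\to\infty$ blocks of size $m_n\to\infty$ and unioning the per-block level-$\gamma_0$ intervals, each one-sided miscoverage probability is driven to $(\gamma_0 + r_{m_n})^{B_n}\to 0$ uniformly over $\mathcal{P}$, after which the fair-coin randomization between $\widehat L_n^*$ and $\widehat U_n^*$ delivers median regularity directly. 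This is a correct and arguably cleaner route to the ``hard'' implication; the independence across blocks and the uniform single-block bound are exactly what make it work, and your non-triviality check is fine.

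There is, however, a real gap in your $(3)\Rightarrow(2)$ step. You bound HulC miscoverage at a fixed level $\gamma$ by $\gamma(1+o(1))$ and then wave at ``letting $B_\gamma$ grow with $n$ and randomizing between consecutive integer block counts, as in \cite{kuchibhotla2021hulc}.'' But the HulC paper only gives validity for each fixed $\alpha$; Definition~5 in the present paper demands a $\tau_n\to 0$ with miscoverage controlled \emph{uniformly over} $\alpha\in[\tau_n,1]$. As $\alpha\downarrow 0$, $B_{\alpha,0}\to\infty$ and the HulC block size $\lfloor n/B_{\alpha,0}\rfloor$ shrinks, so the median bias $s_{\lfloor n/B_{\alpha,0}\rfloor}$ need not be small, and the error factor $(1+2s_{\lfloor n/B_{\alpha,0}\rfloor})^{B_{\alpha,0}}$ can blow up. One must choose $\tau_n$ balancing $\log(2/\tau_n)$ against $s^*_{\lfloor n/B_{\tau_n,0}\rfloor}$, exactly as the paper does in its $(1)\Rightarrow(2)$ argument by setting $\alpha_n = \max\{1/n,\,2\exp(-(r^*_{\lfloor n/B_{1/n,\delta}\rfloor})^{-1/2})\}$ and exploiting monotonicity of $r^*$ and $B_{\alpha,\delta}$. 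The ``randomizing between consecutive $B$'' remark addresses the irrational-level issue of HulC's Remark~2.2, which is orthogonal to this $\tau_n$-selection problem. You acknowledged an order-of-limits issue in $(1)\Rightarrow(3)$ and sketched how to handle it, but the analogous (and harder) bookkeeping in $(3)\Rightarrow(2)$ is precisely where the paper invests most of its effort and cannot be delegated to the HulC reference.
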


Theorem~\ref{thm:equivalence-median-regular-uniform-inference} although is written for a set of distributions $\mathcal{P}$, there is no requirement on its cardinality; it can be an infinite set or a singleton. This flexibility allows us to use Theorem~\ref{thm:equivalence-median-regular-uniform-inference} to derive simple corollaries. For example, take $\mathcal{P} = \{P_0\}$ to be a singleton with only one distribution that does not change with the sample size. Then Theorem~\ref{thm:equivalence-median-regular-uniform-inference} implies that asymptotically \emph{pointwise} confidence intervals exist if and only if asymptotically median unbiased estimators exist. 
\paragraph{Finite sample version and impossibility results.}
It is worth noting that a finite sample version of Theorem~\ref{thm:equivalence-median-regular-uniform-inference} can be obtained from its proof. Furthermore, one can prove that there exists a finite sample valid confidence interval at some level $\gamma\in(0, 1)$ for $\theta(P)$ if and only if for any $\delta\in(0, 1/2)$, there exists an estimator for $\theta(P)$ with median bias bounded by $\delta$. This can be seen from Lemmas~\ref{lem:confidence-median-bias} and~\ref{lem:median-bias-confidence} with $r_n = s_n = 0$ for all $n\ge1$.

With this finite sample version, Theorem~\ref{thm:equivalence-median-regular-uniform-inference} can also be used to derive some impossibility results. For example, there exist many functionals (in non-parametric models) for which no finite sample non-trivial confidence intervals exist; the population mean is an example of such a functional~\citep{bahadur1956nonexistence,pfanzagl1998nonexistence}. In such cases, the finite sample version of Theorem~\ref{thm:equivalence-median-regular-uniform-inference} implies that no estimator that is finite almost surely can control the median bias for such functionals; the estimator must take $\infty$ or $-\infty$ with non-zero probability to control the median bias at a value less than $1/2$. Similarly, there exist functionals for which only finite sample one-sided up/down confidence intervals exists; the number of modes of a density is an example~\citep{donoho1988one}. In such cases again, no estimator that is finite almost surely can have median bias bounded away from $1/2$. 

\paragraph{Rate of convergence and optimality.} Unlike the classical notion of regularity~\eqref{eq:classical-regularity} which explicitly indicates the rate of convergence of the estimator $\widehat{\theta}_n$, the median regularity of an estimator does not mention/need any rate of convergence of the estimator. In fact, the estimator can be inconsistent for the true functional. This lack of reference to rate of convergence is in fact necessary, i.e., if a notion of regularity is equivalent to valid inference for all functionals, then there should be no requirement on the existence of a consistent estimator for the functionals. For example, there are functionals which are only partially identified and for such functionals no consistent estimator exists. There do exist valid confidence intervals for these functionals~\citep{imbens2004confidence,Fan2010}. Such absence of reference to rate of convergence or consistency is made possible only because we are concerned with valid coverage and not about the width of the confidence intervals. A discussion of the width of the confidence interval will typically make reference to the consistency, rate of convergence, and ``optimality'' of the estimator. We note in passing that optimal median regular estimators have been considered by~\cite{pfanzagl1970asymptotic} and~\cite{michel1974bounds}.
\section{Proof of Theorem~\ref{thm:equivalence-median-regular-uniform-inference}}\label{sec:proof-of-equivalence-median-regular-uniform-inference}
\subsection{Intermediate Lemmas}
The following key lemma shows that the existence of a valid confidence interval implies the existence of an estimator with certain bounds on its median bias. 
\begin{lemma}\label{lem:confidence-median-bias}
Fix any distribution $P$. Fix a known $\gamma\in(0, 1)$. Suppose for any sample of $n$ independent and identically distributed observations from $P$, there exists a confidence interval $\widetilde{\mathrm{CI}}_{\gamma,n}$ and $r_n\in[0,1)$ satisfying
\[
\mathbb{P}\left(\theta(P)\notin \widetilde{\mathrm{CI}}_{\gamma,n}\right) ~\le~ \gamma + r_n,
\]
then 
\begin{enumerate}
    \item there exists an estimator $\widehat{\theta}_n$ such that
    \[
    \mathrm{Med}\mbox{-}\mathrm{bias}_{\theta(P)}(\widehat{\theta}_n; P) ~\le~ \frac{\gamma + r_n}{2},
    \]
    \item for any $\alpha\in[0, 1]$, there exists a confidence interval $\widehat{\mathrm{CI}}_{\alpha, n}$ such that
    \[
    \mathbb{P}\left(\theta(P) \notin \widehat{\mathrm{CI}}_{\alpha, n}\right) ~\le~ \alpha\left(1 + 2r_{\lfloor n/B_{\alpha,\gamma/2}\rfloor}\right)^{B_{\alpha,\gamma/2}},
    \]
    where $B_{\alpha,\gamma/2}$ is the smallest integer $B$ such that $(1-\gamma)^B + (1 + \gamma)^B \le 2^B\alpha$.
\end{enumerate}
\end{lemma}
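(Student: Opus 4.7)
The plan is to use the given confidence interval's endpoints to build a randomized point estimator for part~1, and then apply this construction batchwise with a hull-type aggregation in the spirit of the HulC procedure for part~2.

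\textbf{Part 1.} I would write $\widetilde{\mathrm{CI}}_{\gamma,n} = [\widehat{L}_n, \widehat{U}_n]$, introduce an auxiliary $\mathrm{Bernoulli}(1/2)$ random variable $\xi$ independent of the data, and set $\widehat{\theta}_n := \xi \widehat{L}_n + (1-\xi)\widehat{U}_n$ (infinite endpoints cause no difficulty since part~1 does not require a non-trivial estimator). On the coverage event $\{\widehat{L}_n \le \theta(P) \le \widehat{U}_n\}$, which has probability at least $1 - \gamma - r_n$, we simultaneously have $\widehat{U}_n \ge \theta(P)$ and $\widehat{L}_n \le \theta(P)$, so conditioning on $\xi$ yields $\mathbb{P}(\widehat{\theta}_n \ge \theta(P)) \ge (1-\gamma-r_n)/2$ and, symmetrically, $\mathbb{P}(\widehat{\theta}_n \le \theta(P)) \ge (1-\gamma-r_n)/2$. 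Substituting into the definition of median bias gives the claimed bound $(\gamma+r_n)/2$.

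\textbf{Part 2.} I would set $B := B_{\alpha,\gamma/2}$, split the sample into $B$ disjoint batches of size $m := \lfloor n/B \rfloor$, and on each batch run the recipe of part~1 with an independent coin flip to obtain $\widehat{\theta}_n^{(j)}$ of median bias at most $\delta := (\gamma + r_m)/2$; the $B$ resulting estimators are mutually independent. Define $\widehat{\mathrm{CI}}_{\alpha,n} := [\min_j \widehat{\theta}_n^{(j)},\, \max_j \widehat{\theta}_n^{(j)}]$. A miscoverage forces either $\widehat{\theta}_n^{(j)} < \theta(P)$ for all $j$ or $\widehat{\theta}_n^{(j)} > \theta(P)$ for all $j$, so writing $p_- := \mathbb{P}(\widehat{\theta}_n^{(j)} < \theta(P))$ and $p_+ := \mathbb{P}(\widehat{\theta}_n^{(j)} > \theta(P))$, independence yields $\mathbb{P}(\theta(P) \notin \widehat{\mathrm{CI}}_{\alpha,n}) \le p_-^B + p_+^B$. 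The median-bias bound forces $p_-, p_+ \le 1/2 + \delta$, while trivially $p_- + p_+ \le 1$.

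The last step is to maximize the convex objective $p_-^B + p_+^B$ over this compact convex region; for the main regime $\delta \in (0, 1/2)$, the maximum is attained at the vertex $(1/2+\delta,\, 1/2-\delta)$ (or its mirror), giving $(1/2+\delta)^B + (1/2-\delta)^B$. Substituting $\delta = (\gamma + r_m)/2$ and applying the elementary inequalities $1+\gamma+r_m \le (1+\gamma)(1+r_m) \le (1+\gamma)(1+2r_m)$ and $1 - \gamma - r_m \le 1 - \gamma \le (1-\gamma)(1+2r_m)$, together with the defining condition $(1-\gamma)^B + (1+\gamma)^B \le 2^B \alpha$ of $B_{\alpha,\gamma/2}$, produces the claimed bound $\alpha(1 + 2r_m)^B$. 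I expect this final algebraic reduction to be the main obstacle: edge cases where $\gamma + r_m \ge 1$ (so $1 - \gamma - r_m$ becomes non-positive) require a separate handling of the convex-maximization argument, but in the intended regime of small $r_m$ and moderate $\gamma$ the chain of inequalities is clean.
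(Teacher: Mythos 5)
Your Part~1 construction (randomize with a fair coin between the two endpoints of $\widetilde{\mathrm{CI}}_{\gamma,n}$ and lower-bound both tail probabilities by half the coverage probability) is exactly the paper's argument. For Part~2 the paper simply invokes Theorem~2 and Remark~2.2 of the HulC paper of Kuchibhotla et al.\ as a black box, whereas you unpack that same batching/min--max/convex-maximization argument explicitly and carry through the factorization $(1+\gamma+r_m)\le(1+\gamma)(1+2r_m)$, $(1-\gamma-r_m)_+\le(1-\gamma)(1+2r_m)$ to recover the stated bound; this is precisely what the cited remark is doing under the hood, and the only loose end you flag yourself, the degenerate case $\gamma + r_{\lfloor n/B\rfloor}\ge 1$, is asymptotically vacuous since $r_n\to 0$ and is equally left implicit in the paper's citation.
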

\begin{proof}
Let $\widetilde{\mathrm{CI}}_{\gamma,n} = [\ell_n, u_n]$.
Consider the estimator
\[
\widehat{\theta}_n = 
\begin{cases}
\ell_n, &\mbox{with probability $1/2$},\\
u_n, &\mbox{with probability $1/2$.}
\end{cases}
\]
Observe that
\begin{align*}
\mathbb{P}(\widehat{\theta}_n \le \theta(P)) &= \mathbb{P}(\ell_n \le \theta(P))\times\frac{1}{2} + \mathbb{P}(u_n \le \theta(P))\times\frac{1}{2}\\
&\ge \mathbb{P}(\ell_n \le \theta(P) \le u_n)\times\frac{1}{2} + 0\times\frac{1}{2}\\
&\ge (1 - \gamma - r_n)_+/2.
\end{align*}
Similarly,
\begin{align*}
\mathbb{P}(\widehat{\theta}_n \ge \theta(P)) &= \mathbb{P}(\ell_n \ge \theta(P))\times\frac{1}{2} + \mathbb{P}(u_n \ge \theta(P))\times\frac{1}{2}\\
&\ge 0\times\frac{1}{2} + \mathbb{P}(\ell_n \le \theta(P) \le u_n)\times\frac{1}{2}\\
&\ge (1 - \gamma - r_n)_+/2.
\end{align*}
Therefore,
\[
\min\left\{\mathbb{P}(\widehat{\theta}_n \le \theta(P)), \mathbb{P}(\widehat{\theta}_n \ge \theta(P))\right\} \ge \frac{(1 - \gamma - r_n)_+}{2},
\]
and hence,
\[
\mbox{Med-bias}_{\theta(P)}(\widehat{\theta}_n; P) \le \frac{\gamma + r_n}{2}.
\]
This proves that the existence of a confidence interval implies the existence of an estimator with a specific bound on the median bias.

To construct a confidence interval at arbitrary level $\alpha$, Theorem 2 (and Remark 2.2) of~\cite{kuchibhotla2021hulc} using the estimator $\widehat{\theta}_n$, its finite sample bias as approximately $\gamma/2$, and $B = B_{\alpha,\gamma/2}$ yields a confidence interval $\widehat{\mathrm{CI}}_{\alpha,n}$ satisfying
\[
\mathbb{P}(\theta(P)\notin \widehat{\mathrm{CI}}_{\alpha,n}) \le \alpha\left(1 + 2r_{\lfloor n/B_{\alpha,\gamma/2}\rfloor}\right)^{B_{\alpha,\gamma/2}}.
\]
This completes the proof.
\end{proof}
\begin{remark}
One can construct an alternative $\widehat{\mathrm{CI}}_{\alpha,n}$ by considering union of independent confidence intervals. If we split the data into $B = \lceil\log_{\gamma}\alpha\rceil$ batches each of size $\lfloor{n/B}\rfloor$ and construct confidence intervals $\widetilde{\mathrm{CI}}^{(j)}_{\gamma,\lfloor{n/B}\rfloor}$ on each batch. 
Consider the confidence interval
\[
\widehat{\mathrm{CI}}_{\alpha, n} = \bigcup_{j=1}^{B} \widetilde{\mathrm{CI}}_{\gamma, \lfloor{n/B}\rfloor}^{(j)}.
\]
Observe that
\begin{align*}
\mathbb{P}\left(\theta(P) \notin \widehat{\mathrm{CI}}_{\alpha,n}\right) &= \prod_{j=1}^B \mathbb{P}(\theta(P)\notin\widetilde{\mathrm{CI}}_{\gamma,\lfloor{n/B}\rfloor}) \le (\gamma + r_{\lfloor{n/B}\rfloor})^B \leq \alpha(1 + r_{\lfloor{n/B}\rfloor}/\gamma)^B.
\end{align*}
If $\alpha \ge \gamma$, then take $B = 1$ in the above calculation; this just returns the confidence interval $\widetilde{\mathrm{CI}}_{\gamma, n}$. Hence, we get $\widehat{\mathrm{CI}}_{\alpha,n}$ that satisfies
\[
\mathbb{P}(\theta(P)\notin\widehat{\mathrm{CI}}_{\alpha,n}) \le \begin{cases}\alpha \left(1 + \frac{r_{\lfloor n/\lceil\log_{\gamma}\alpha\rceil\rfloor}}{\gamma}\right)^{\lceil\log_{\gamma}\alpha\rceil},&\mbox{if }\alpha \le \gamma,\\
\alpha, &\mbox{if }\alpha > \gamma.\end{cases}
\]
\end{remark}
\begin{lemma}\label{lem:median-bias-confidence}
Fix any distribution $P$. Fix a known $\gamma\in(0, 1)$. Suppose for any sample of $n$ independent and identically distributed observations from $P$, there exists an estimator $\widehat{\theta}_n$ and $s_n \in [0, 1)$ satisfying
\[
\mathrm{Med}\mbox{-}\mathrm{bias}_{\theta(P)}(\widehat{\theta}_n; P) \le \gamma + s_n,
\]
then for any $\alpha\in(0, 1)$, there exists a confidence interval $\widehat{\mathrm{CI}}_{\alpha,n}$ such that
\[
\mathbb{P}\left(\theta(P)\notin \widehat{\mathrm{CI}}_{\alpha,n}\right) ~\le~ \alpha\left(1 + 2s_{\lfloor n/B_{\alpha,\gamma}\rfloor}\right)^{B_{\alpha,\gamma}}.
\]
\end{lemma}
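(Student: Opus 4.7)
The plan is to mirror the construction from part~2 of Lemma~\ref{lem:confidence-median-bias}: apply the HulC procedure of~\cite{kuchibhotla2021hulc} to the given estimator with the number of batches set to $B = B_{\alpha,\gamma}$.

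Concretely, I would split the $n$ observations into $B$ disjoint batches of size $m = \lfloor n/B\rfloor$ each, compute the estimator $\widehat{\theta}^{(j)}$ on batch $j$, and define
\[
\widehat{\mathrm{CI}}_{\alpha,n} \;=\; \Bigl[\min_{1\le j \le B}\widehat{\theta}^{(j)},\ \max_{1\le j \le B}\widehat{\theta}^{(j)}\Bigr].
\]
Since the batches are disjoint and the data are i.i.d., the $\widehat{\theta}^{(j)}$ are mutually independent and each has the distribution of $\widehat{\theta}_m$. The miscoverage event $\{\theta(P) \notin \widehat{\mathrm{CI}}_{\alpha,n}\}$ is the disjoint union $\{\widehat{\theta}^{(j)} > \theta(P)\text{ for all }j\}\cup\{\widehat{\theta}^{(j)} < \theta(P)\text{ for all }j\}$, whose probability by independence equals $a^B + b^B$, where $a = \mathbb{P}(\widehat{\theta}_m > \theta(P))$ and $b = \mathbb{P}(\widehat{\theta}_m < \theta(P))$.

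Next, I would invoke the HulC maximization from Theorem~2 of~\cite{kuchibhotla2021hulc}. The median-bias hypothesis yields $a, b \le 1/2 + \gamma + s_m$, together with the trivial constraint $a + b \le 1$. Maximizing the convex function $a^B + b^B$ over this polytope at an extreme point gives
\[
\mathbb{P}\bigl(\theta(P) \notin \widehat{\mathrm{CI}}_{\alpha,n}\bigr) \;\le\; \frac{(1 - 2(\gamma+s_m))_+^B + (1 + 2(\gamma + s_m))^B}{2^B}.
\]

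Finally, I would factor out $(1 + 2s_m)^B$ using the two inequalities $1 + 2\gamma + 2s_m \le (1+2\gamma)(1+2s_m)$ and $(1 - 2\gamma - 2s_m)_+ \le (1-2\gamma)_+(1+2s_m)$, both of which follow from expanding the products and using $\gamma,s_m\ge 0$. Taking $B$-th powers and applying the defining property of $B = B_{\alpha,\gamma}$ then yields
\[
\mathbb{P}\bigl(\theta(P)\notin \widehat{\mathrm{CI}}_{\alpha,n}\bigr) \;\le\; (1 + 2s_m)^B \cdot \frac{(1-2\gamma)^B + (1+2\gamma)^B}{2^B} \;\le\; \alpha(1+2s_m)^B,
\]
which is the claimed bound. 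The main substantive step is the HulC maximization: one must carefully handle any atom of $\widehat{\theta}_m$ at $\theta(P)$ and verify that the extreme point of the constraint polytope indeed gives the displayed bound. This is precisely what is imported from Theorem~2 of~\cite{kuchibhotla2021hulc}, and the remaining algebraic simplification is routine.
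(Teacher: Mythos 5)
Your proposal is correct and takes essentially the same route as the paper: the paper's proof simply refers to the proof of Lemma~\ref{lem:confidence-median-bias}(2), which in turn invokes Theorem~2 of \cite{kuchibhotla2021hulc}, and you have unpacked exactly that HulC argument (disjoint batching, bounding the miscoverage $a^B+b^B$ by a convexity/extreme-point maximization under the median-bias constraints, then factoring out $(1+2s_m)^B$). The only detail worth flagging is the implicit restriction to $\gamma+s_m\le 1/2$ (otherwise $B_{\alpha,\gamma}$ may not be well-defined and the bound is vacuous), which is the regime the paper implicitly works in as well.
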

\begin{proof}
The proof is exactly the same as the proof of Lemma~\ref{lem:confidence-median-bias}(2).
\end{proof}
\subsection{Proof of Theorem~\ref{thm:equivalence-median-regular-uniform-inference}}
We will prove the result by proving~\ref{eq:valid-at-gamma} $\Rightarrow$~\ref{eq:valid-always},~\ref{eq:valid-always} $\Rightarrow$~\ref{eq:median-regular}, and~\ref{eq:median-regular} $\Rightarrow$~\ref{eq:valid-at-gamma}. The aspect of non-triviality of estimators and confidence intervals follows from the proof and is not explicitly mentioned.

\paragraph{\ref{eq:valid-at-gamma} $\Rightarrow$~\ref{eq:valid-always}:} Let $\{\widetilde{\mathrm{CI}}_{\gamma,n}, n\ge1\}$ be an asymptotically uniformly valid confidence interval procedure at level $\gamma$, i.e., 
\[
\limsup_{n\to\infty} \sup_{P\in\mathcal{P}} \left(\mathbb{P}_P(\theta(P)\notin\widetilde{\mathrm{CI}}_{\gamma,n}) - \gamma\right)_+ = 0.
\]
Define
\[
r_n := \sup_{P\in\mathcal{P}}\left(\mathbb{P}_P(\theta(P)\notin\widetilde{\mathrm{CI}}_{\gamma,n}) - \gamma\right)_+.
\]
The hypothesis implies that $r_n \to 0$. Note that $r_n$ is not known in practice. Lemma~\ref{lem:confidence-median-bias} provides a construction of a confidence interval procedure $\{\widehat{\mathrm{CI}}_{\alpha,n}:\alpha\in[0, 1], n\ge1\}$ based on $\{\widetilde{\mathrm{CI}}_{\gamma,n}:n\ge1\}$ that satisfies for all $P\in\mathcal{P}$ and all $\alpha\in[0, 1]$:
\[
\mathbb{P}_P\left(\theta(P)\notin \widehat{\mathrm{CI}}_{\alpha,n}\right) \le \alpha\left(1 + 2r_{\lfloor n/B_{\alpha,\gamma/2}\rfloor}\right)^{B_{\alpha,\gamma/2}} \le \alpha\times\exp\left(2B_{\alpha,\gamma/2}r_{\lfloor n/B_{\alpha,\gamma/2}\rfloor}\right).
\]
This implies that
\begin{equation}\label{eq:miscoverage-bound-for-all-alpha}
\sup_{P\in\mathcal{P}}\left(\mathbb{P}_P(\theta(P)\notin\widehat{\mathrm{CI}}_{\alpha,n}) - \alpha\right)_+ \le \exp\left(2B_{\alpha,\gamma/2}r_{\lfloor n/B_{\alpha,\gamma/2}\rfloor}\right) - 1.
\end{equation}
Note that the right hand side of~\eqref{eq:miscoverage-bound-for-all-alpha} only depends on $\mathcal{P}$ through $r_{\lfloor n/B_{\alpha,\gamma/2}\rfloor}$. In order to prove that $\{\widehat{\mathrm{CI}}_{\alpha,n}:\alpha\in[0,1],n\ge1\}$ is asymptotically uniformly valid for $\mathcal{P}$, it suffices to find an $\alpha_n \to 0$ such that the right hand side of~\eqref{eq:miscoverage-bound-for-all-alpha} converges to zero uniformly over all $\alpha\in[\alpha_n, 1]$. This is equivalent to proving that $B_{\alpha,\gamma/2}r_{\lfloor n/B_{\alpha,\gamma/2}\rfloor}$ converges to zero uniformly over all $\alpha\in[\alpha_n,1]$. For notational convenience, set $\delta = \gamma/2$. Recall that $B_{\alpha,\delta}$ is defined as
\[
B_{\alpha,\delta} = \min\left\{B\in\{1, 2, \ldots\}:\, (1/2 - \delta)^B + (1/2 + \delta)^B \le \alpha\right\}.
\]
It is easily verified that $B_{\alpha,\delta}$ is an increasing function of $\alpha$ and satisfies
\begin{equation}\label{eq:inequalities-B-alpha-delta}
\left\lfloor\frac{\log(2/\alpha)}{\log(2)}\right\rfloor \le B_{\alpha,\delta} \le \left\lceil\frac{\log(2/\alpha)}{\log(2/(1 + 2\delta))}\right\rceil \le \frac{2\log(2/\alpha)}{\log(2/(1 + 2\delta))}.
\end{equation}
Now, define the new sequence $\{r_k^*\}_{k\ge1}$ based on $\{r_k\}_{k\ge1}$ as $r_k^* = \sup_{m\ge k}r_m$. It is clear that $r_k^*$ is decreasing in $k$ and $r_k^* \to 0$ as $k\to\infty$ if $r_k\to0$ as $k\to\infty$.
Because $\gamma$ and hence $\delta$ are fixed numbers as $n\to\infty$,
\[
\sup_{\alpha\in[\alpha_n,1]}\,\log(2/\alpha)r_{\lfloor n/B_{\alpha,\delta}\rfloor}^* \to 0\mbox{ as $n\to\infty$} \quad\Rightarrow\quad \sup_{\alpha\in[\alpha_n, 1]}\,B_{\alpha,\delta}r_{\lfloor n/B_{\alpha,\delta}\rfloor} \to 0\mbox{ as $n\to\infty$}.
\]
Given the monotonicity properties of $\log(2/\alpha)$ (in $\alpha$) and of $r^*_k$ in $k$, it is easy to prove that
\[
\sup_{\alpha\in[\alpha_n,1]}\,\log(2/\alpha)r_{\lfloor n/B_{\alpha,\delta}\rfloor}^* = \log(2/\alpha_n)r^*_{\lfloor n/B_{\alpha_n,\delta}\rfloor}.
\]
Therefore, finding $\alpha_n\to0$ as $n\to\infty$ such that the right hand side of~\eqref{eq:miscoverage-bound-for-all-alpha} converges to zero uniformly over all $\alpha\in[\alpha_n, 1]$ is same as finding $\alpha_n\to0$ such that $\log(2/\alpha_n)r^*_{\lfloor n/B_{\alpha_n,\delta}\rfloor}$ converges to zero. Define
\[
\alpha_n ~:=~ \max\left\{\frac{1}{n},\, 2\exp\left(-(r^*_{\lfloor n/B_{1/n,\delta}}\rfloor)^{-1/2}\right)\right\}.
\]
From inequalities~\eqref{eq:inequalities-B-alpha-delta}, it follows that $\lfloor n/B_{1/n,\delta}\rfloor \to \infty$ as $n\to\infty$ and hence, $r^*_{\lfloor n/B_{1/n,\delta}\rfloor} \to 0$ as $n\to\infty$. This implies that the choice of $\alpha_n$ above converges to zero as $n\to\infty$ for any $\delta > 0$. Furthermore, 
\[
r^*_{\lfloor n/B_{\alpha_n,\delta}\rfloor}\log(2/\alpha_n) \le r^*_{\lfloor n/B_{1/n,\delta}\rfloor}\min\left\{\log(2n),\,(r^*_{\lfloor n/B_{1/n,\delta}\rfloor})^{-1/2}\right\} \le (r^*_{\lfloor n/B_{1/n,\delta}\rfloor})^{1/2} \to 0,
\]
as $n\to\infty$. Therefore, as $n\to\infty$,
\[
\sup_{\alpha\in[\alpha_n, 1]}\sup_{P\in\mathcal{P}}\left(\mathbb{P}_P(\theta(P)\notin \widehat{\mathrm{CI}}_{\alpha,n}) - \alpha\right)_+ \le \exp\left(\sup_{\alpha\in[\alpha_n,1]}B_{\alpha,\gamma/2}r_{\lfloor n/B_{\alpha,\gamma/2}\rfloor}\right) - 1 \to 0.
\]
Hence, $\{\widehat{\mathrm{CI}}_{\alpha,n}:\alpha\in[0, 1], n\ge1\}$ is an asymptotically uniformly valid confidence interval for the set of distributions $\mathcal{P}$. This completes the proof of~\ref{eq:valid-at-gamma} implies~\ref{eq:valid-always}.

\paragraph{\ref{eq:valid-always} $\Rightarrow$~\ref{eq:median-regular}:} Suppose $\{\widetilde{\mathrm{CI}}_{\gamma,n}, \gamma\in[0,1], n\ge1\}$ be an asymptotically uniformly valid confidence interval procedure, i.e., there exists $\alpha_n\to0$ as $n\to\infty$ such that 
\begin{equation}\label{eq:uniform-validity-implication}
\limsup_{n\to\infty} \sup_{\alpha\in[\alpha_n, 1]}\sup_{P\in\mathcal{P}} \left(\mathbb{P}_P(\theta(P)\notin\widetilde{\mathrm{CI}}_{\gamma,n}) - \gamma\right)_+ = 0.
\end{equation}
Define
\[
r_n := \sup_{\alpha\in[\alpha_n,1]}\sup_{P\in\mathcal{P}}\left(\mathbb{P}_P(\theta(P)\notin\widetilde{\mathrm{CI}}_{\gamma,n}) - \gamma\right)_+.
\]
From~\eqref{eq:uniform-validity-implication}, $r_n\to0$ as $n\to\infty$.
Define a new confidence interval procedure $\{\widehat{\mathrm{CI}}_{\alpha,n}:\,\alpha\in[0, 1], n\ge1\}$ as
\[
\widehat{\mathrm{CI}}_{\alpha,n} := \bigcup_{\kappa \ge \alpha}\widetilde{\mathrm{CI}}_{\kappa,n}.
\]
This monotonizes the confidence intervals in terms of the confidence level. Ideally, in a family of confidence intervals, we expect that the confidence interval of confidence $90\%$ (say) contains the confidence interval of confidence $80\%$ (say). The initial given procedure $\widetilde{\mathrm{CI}}_{\gamma, n}, \gamma\in[0, 1]$ may not satisfy this. The defined $\widehat{\mathrm{CI}}_{\gamma,n}, \gamma\in[0, 1]$ does satisfy this. Furthermore,
\begin{align*}
\mathbb{P}(\theta(P)\notin \widehat{\mathrm{CI}}_{\gamma,n}) ~&\le~ \mathbb{P}(\theta(P)\notin \widehat{\mathrm{CI}}_{\kappa,n}) ~\le~ \mathbb{P}(\theta(P)\notin \widetilde{\mathrm{CI}}_{\kappa,n})\quad\mbox{for all}\quad \kappa \ge \gamma.
\end{align*}
Now consider the coverage of $\widehat{\mathrm{CI}}_{n^{-1/2},n}$ as $n\to\infty$. From the monotonicity of $\widehat{\mathrm{CI}}_{\gamma,n}$ in $\gamma\in[0, 1]$, it follows that
\begin{equation}\label{eq:monotonicity-implication}
\mathbb{P}(\theta(P)\notin\widehat{\mathrm{CI}}_{n^{-1/2},n}) ~\le~ 
\begin{cases}
\mathbb{P}(\theta(P)\notin \widetilde{\mathrm{CI}}_{\alpha_n,n}),&\mbox{if }\alpha_n \ge n^{-1/2},\\
\mathbb{P}(\theta(P)\notin \widetilde{\mathrm{CI}}_{n^{1/2},n}),&\mbox{if }\alpha_n \le n^{-1/2}.
\end{cases}
\end{equation}
Recall here that $\alpha_n$ is an unknown quantity from~\eqref{eq:uniform-validity-implication}. Inequality~\eqref{eq:monotonicity-implication} follows from considering the cases $\alpha_n \le n^{-1/2}$ and $\alpha_n \ge n^{-1/2}$. From~\eqref{eq:uniform-validity-implication}, we obtain
\[
\mathbb{P}(\theta(P)\notin\widetilde{\mathrm{CI}}_{\alpha_n,n}) \le \alpha_n + r_n,
\]
and if $\alpha_n \le n^{-1/2} \le 1$, then
\[
\mathbb{P}(\theta(P)\notin\widetilde{\mathrm{CI}}_{n^{-1/2},n}) \le n^{-1/2} + r_n.
\]
Hence, for all $n\ge1$,
\[
\sup_{P\in\mathcal{P}}\mathbb{P}(\theta(P)\notin \widehat{\mathrm{CI}}_{n^{-1/2},n}) \le \max\{\alpha_n, n^{-1/2}\} + r_n.
\]
Lemma~\ref{lem:confidence-median-bias} now implies that there exists an estimator $\widehat{\theta}_n$ obtained from the confidence interval $\widehat{\mathrm{CI}}_{n^{-1/2},n}$ such that
\[
\sup_{P\in\mathcal{P}}\mbox{Med-bias}_{\theta(P)}(\widehat{\theta}_n; P) \le \frac{\max\{\alpha_n, n^{-1/2}\}}{2}.
\]
Because $\max\{\alpha_n, n^{-1/2}\}$ converges to zero as $n\to\infty$, we obtain an estimator sequence $\{\widehat{\theta}_n, n\ge 1\}$ such that
\[
\limsup_{n\to\infty}\sup_{P\in\mathcal{P}}\mbox{Med-bias}_{\theta(P)}(\widehat{\theta}_n; P) = 0.
\]
This completes the proof of~\ref{eq:valid-always} $\Rightarrow$~\ref{eq:median-regular}.
\paragraph{\ref{eq:median-regular} $\Rightarrow$~\ref{eq:valid-at-gamma}:} Suppose $\widehat{\theta}_n, n\ge 1$ be an estimator sequence such that
\[
\limsup_{n\to\infty}\,\sup_{P\in\mathcal{P}}\mbox{Med-bias}_{\theta(P)}(\widehat{\theta}_n; P) = 0.
\]
Define
\[
s_n = \sup_{P\in\mathcal{P}}\mbox{Med-bias}_{\theta(P)}(\widehat{\theta}_n; P).
\]
By the hypothesis, $s_n\to0$ as $n\to\infty$. Lemma~\ref{lem:median-bias-confidence} implies that there exists a confidence interval procedure $\{\widehat{\mathrm{CI}}_{\alpha,n}: \alpha\in[0, 1], n\ge1\}$ such that for all $\alpha\in[0, 1]$,
\[
\sup_{P\in\mathcal{P}}\mathbb{P}(\theta(P)\notin\widehat{\mathrm{CI}}_{\alpha,n}) \le \alpha\left(1 + 2s_{\lfloor n/B_{\alpha,0}\rfloor}\right)^{B_{\alpha,0}}.
\]
Therefore,
\[
\sup_{P\in\mathcal{P}}\mathbb{P}(\theta(P)\notin\widehat{\mathrm{CI}}_{\gamma,n})\le \gamma\left(1 + 2s_{\lfloor n/B_{\gamma,0}\rfloor}\right)^{B_{\gamma,0}}.
\]
With $\gamma\in[0, 1]$ being a fixed number and $B_{\gamma, 0} = \lceil \log_2(2/\gamma)\rceil$, $s_{\lfloor n/B_{\gamma,0}\rfloor} \to 0$ as $n\to\infty$. Hence it follows that
\[
\limsup_{n\to\infty}\sup_{P\in\mathcal{P}}\mathbb{P}(\theta(P)\notin\widehat{\mathrm{CI}}_{\gamma,n})\le \gamma.
\]
This completes the proof of~\ref{eq:median-regular} $\Rightarrow$~\ref{eq:valid-at-gamma}.
\section{Summary and Conclusions}
We introduced a new notion of regularity of an estimator and proved that uniformly valid inference for a functional is possible if and only if there exists a median regular estimator for that functional. The method of proof is constructive, explicitly providing an estimator from a uniformly valid confidence interval and vice versa. The confidence interval obtained from a median regular estimator is constructed via the recently proposed HulC method~\citep{kuchibhotla2021hulc}. As a simple implication of the results of this paper we can conclude that inference for a functional is possible if and only if the HulC methodology yields valid inference with some estimator. Such an implication, to our knowledge, is not true for any other inference procedure including the bootstrap, and subsampling.  

Although the method of proof is constructive, the result is mainly of theoretical interest currently. In many simple examples, we know that the uniformly valid confidence intervals or the median regular estimators obtained via the proof are sub-optimal and it is an important research direction to formalize both a useful notion of ``optimality'' as well as to study ``optimal'' constructions, i.e. procedures which yield optimal median regular estimators from (optimal) confidence intervals and vice versa. 
\bibliographystyle{apalike}
\bibliography{references}

\begin{thebibliography}{}

\bibitem[Andrews and Guggenberger, 2010]{andrews2010asymptotic}
Andrews, D.~W. and Guggenberger, P. (2010).
\newblock Asymptotic size and a problem with subsampling and with the m out of
  n bootstrap.
\newblock {\em Econometric Theory}, pages 426--468.

\bibitem[Bahadur and Savage, 1956]{bahadur1956nonexistence}
Bahadur, R.~R. and Savage, L.~J. (1956).
\newblock The nonexistence of certain statistical procedures in nonparametric
  problems.
\newblock {\em The Annals of Mathematical Statistics}, 27(4):1115--1122.

\bibitem[Desu and Rodine, 1969]{mahamunulu1969estimation}
Desu, M.~M. and Rodine, R.~H. (1969).
\newblock Estimation of the population median.
\newblock {\em Scandinavian Actuarial Journal}, 1969(1-2):67--70.

\bibitem[Donoho, 1988]{donoho1988one}
Donoho, D.~L. (1988).
\newblock One-sided inference about functionals of a density.
\newblock {\em The Annals of Statistics}, pages 1390--1420.

\bibitem[Fan and Park, 2010]{Fan2010}
Fan, Y. and Park, S.~S. (2010).
\newblock Sharp bounds on the distribution of treatment effects and their
  statistical inference.
\newblock {\em Econometric Theory}, 26(3):931--951.

\bibitem[Hirano and Porter, 2012]{hirano2012impossibility}
Hirano, K. and Porter, J.~R. (2012).
\newblock Impossibility results for nondifferentiable functionals.
\newblock {\em Econometrica}, 80(4):1769--1790.

\bibitem[Imbens and Manski, 2004]{imbens2004confidence}
Imbens, G.~W. and Manski, C.~F. (2004).
\newblock Confidence intervals for partially identified parameters.
\newblock {\em Econometrica}, 72(6):1845--1857.

\bibitem[Kuchibhotla et~al., 2021]{kuchibhotla2021hulc}
Kuchibhotla, A.~K., Balakrishnan, S., and Wasserman, L. (2021).
\newblock The hulc: Confidence regions from convex hulls.
\newblock {\em arXiv preprint arXiv:2105.14577}.

\bibitem[Leeb and P{\"o}tscher, 2005]{leeb2005model}
Leeb, H. and P{\"o}tscher, B.~M. (2005).
\newblock Model selection and inference: Facts and fiction.
\newblock {\em Econometric Theory}, 21(1):21--59.

\bibitem[Li, 1989]{li1989honest}
Li, K.-C. (1989).
\newblock Honest confidence regions for nonparametric regression.
\newblock {\em The Annals of Statistics}, 17(3):1001--1008.

\bibitem[Loh, 1984]{loh1984estimating}
Loh, W.-Y. (1984).
\newblock Estimating an endpoint of a distribution with resampling methods.
\newblock {\em The Annals of Statistics}, 12(4):1543--1550.

\bibitem[Michel, 1974]{michel1974bounds}
Michel, R. (1974).
\newblock Bounds for the efficiency of approximately median unbiased estimates.
\newblock {\em Metrika}, 21(1):205--211.

\bibitem[Pfanzagl, 1970]{pfanzagl1970asymptotic}
Pfanzagl, J. (1970).
\newblock On the asymptotic efficiency of median unbiased estimates.
\newblock {\em Annals of Mathematical Statistics}, 41(5):1500--1509.

\bibitem[Pfanzagl, 1979]{pfanzagl1979optimal}
Pfanzagl, J. (1979).
\newblock On optimal median unbiased estimators in the presence of nuisance
  parameters.
\newblock {\em Annals of Statistics}, 7(1):187--193.

\bibitem[Pfanzagl, 1998]{pfanzagl1998nonexistence}
Pfanzagl, J. (1998).
\newblock The nonexistence of confidence sets for discontinuous functionals.
\newblock {\em Journal of statistical planning and inference}, 75(1):9--20.

\bibitem[Pfanzagl, 2000]{pfanzagl2000regularly}
Pfanzagl, J. (2000).
\newblock Are regularly estimable functionals always differentiable?
\newblock {\em aequationes mathematicae}, 59(3):262--272.

\bibitem[Pfanzagl, 2011]{pfanzagl2011parametric}
Pfanzagl, J. (2011).
\newblock {\em Parametric statistical theory}.
\newblock Walter de Gruyter.

\bibitem[P{\"o}tscher, 2002]{potscher2002lower}
P{\"o}tscher, B.~M. (2002).
\newblock Lower risk bounds and properties of confidence sets for ill-posed
  estimation problems with applications to spectral density and persistence
  estimation, unit roots, and estimation of long memory parameters.
\newblock {\em Econometrica}, 70(3):1035--1065.

\bibitem[Robins, 2004]{robins2004optimal}
Robins, J.~M. (2004).
\newblock Optimal structural nested models for optimal sequential decisions.
\newblock In {\em Proceedings of the second seattle Symposium in
  Biostatistics}, pages 189--326. Springer.

\bibitem[Romano and Shaikh, 2012]{romano2012uniform}
Romano, J.~P. and Shaikh, A.~M. (2012).
\newblock On the uniform asymptotic validity of subsampling and the bootstrap.
\newblock {\em The Annals of Statistics}, 40(6):2798--2822.

\bibitem[Van Der~Vaart, 1991]{van1991differentiable}
Van Der~Vaart, A. (1991).
\newblock On differentiable functionals.
\newblock {\em The Annals of Statistics}, pages 178--204.

\end{thebibliography}
\end{document}